\documentclass[a4paper,reqno,11pt]{amsart}
\usepackage{amsmath}
\usepackage{amssymb}
\usepackage{cases}
\allowdisplaybreaks[4]

\newtheorem{thm}{Theorem}[section]
\newtheorem{prop}[thm]{Proposition}

\newtheorem{lem}[thm]{Lemma}

\newtheorem{cor}[thm]{Corollary}

\theoremstyle{definition}
\newtheorem{definition}[thm]{Definition}

\theoremstyle{remark}

\numberwithin{equation}{section}

\newcommand{\bQ}{\mathbb{Q}}


\newcommand\PP{{\mathcal{P}}}

 \newcommand\DD{{\mathcal{D}}}
 \newcommand\TT{{\mathcal{T}}}
 \newcommand\UU{{\mathcal{U}}}

 \newcommand\Supp{{\rm{Supp}}}

\newcommand\Nklt{{\rm{Nklt}}}

\newcommand\Chow{{\rm{Chow}}}
\begin{document}

\title{On Fujita invariants of subvariaties of a uniruled variety}
\date{\today}
\author{Christopher D. Hacon}
\address{%
CDH:\newline\indent Department of Mathematics \\  
University of Utah\\  
Salt Lake City, UT 84112, USA}
\email{hacon@math.utah.edu}

\author{Chen Jiang}
\address{%
CJ:\newline\indent Kavli IPMU (WPI)\\ UTIAS, The University of Tokyo\\ Kashiwa, Chiba 277-8583, Japan.}
\email{chen.jiang@ipmu.jp}
 \thanks{The first author
 was supported by NSF research grants no: DMS-1300750, DMS-1265285 and by
  a grant from the Simons Foundation; Award Number: 256202.}
\thanks{The second author was supported by the Program for Leading Graduate  Schools and World Premier International Research Center Initiative (WPI), MEXT, Japan}

\begin{abstract}
We show that if $X$ is a smooth uniruled projective variety  and $L$ a big and semiample $\bQ$-divisor on $X$, then there exists a proper closed subset $W\subset X$ such that every subvariety $Y$ satisfying
$a(Y,L)>  a(X,L)$ is contained in $W$. 
\end{abstract} 

\maketitle

\section{Introduction}

If $X$ is a smooth projective variety and $L$ is a  big $\bQ$-divisor on $X$, then the {\it Fujita invariant}, or {\it $a$-constant} is defined as follows
$$a(X,L)={\rm inf}\{ t>0 \mid  K_X+tL\ {\rm is\ big}\}.$$
Note that $a(X,L)\in \mathbb R_{\geq 0}$ is well defined since $K_X+tL$ is big for all $t>0$ sufficiently large, and that $a(X,L)>0$ if and only if $K_X$ is not pseudo-effective. It is easy to see that the $a$-constant is a birational invariant in the sense that if $\nu :X'\to X$ is a birational morphism of smooth varieties and $L'=\nu ^*L$, then $a(X,L)=a(X',L')$. Therefore we may also define the $a$-constant for a big $\bQ$-Cartier $\bQ$-divisor $L$ on an arbitrary normal projective variety $X$ by letting 
$$a(X,L):=a(X',L')$$ where $\nu :X'\to X$ is a resolution of singularities and $L'=\nu ^* L$. Note that if $X$ is smooth, then the $a$-constant is the usual pseudo-effective threshold, however if $X$ is singular, these numbers may be different.

In \cite{LTT}, motivated by a conjecture of Batyrev and Manin that relates arithmetic
properties of varieties with ample anticanonical class to geometric
invariants,
$a$-constants were intensively studied by Lehmann, Tanimoto and Tschinkel. They show that
(\cite[Theorem 1.1]{LTT}), if  $X$ is a smooth uniruled projective variety  and $L$ an ample $\bQ$-divisor on $X$, 
then there exists a countable union of proper closed subsets $W\subset X$ such that every subvariety $Y$ satisfying
$a(Y,L)>  a(X,L)$ is contained in $W$. 
For the purpose of applications, it is expected that one may choose $W$ to be a proper closed subset of $X$. The purpose of this note is to prove that this is indeed the case:
\begin{thm}\label{main}
Let $X$ be a smooth uniruled projective variety  and $L$ a big and semiample $\bQ$-divisor on $X$. 
Then there exists a proper closed subset $W\subset X$ such that every subvariety $Y$ satisfying
$a(Y,L)>  a(X,L)$ is contained in $W$. 
\end{thm}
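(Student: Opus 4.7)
The plan is to upgrade \cite[Theorem 1.1]{LTT} from a countable union of proper closed subsets to a single proper closed subset, by establishing a boundedness statement for the ``bad'' subvarieties $Y$ satisfying $a(Y,L)>a(X,L)$ --- a step that uses the semiampleness of $L$ in an essential way.

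First, I would stratify the bad subvarieties $Y$ into flat families. By \cite[Theorem 1.1]{LTT}, these $Y$ are parametrized by countably many irreducible components of the Chow variety $\Chow(X)$. Since the $a$-constant is a numerical-cum-birational invariant, it is constant on each such component (on a dense open subset), equal to some $c_j > a(X,L)$. Let $V_j \subseteq X$ denote the image of the universal family over the $j$-th component; each $V_j$ is closed, and the LTT theorem guarantees $V_j \subsetneq X$.

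The crucial step is to show that only finitely many $V_j$ actually occur. Since $L$ is big and semiample, a sufficiently divisible multiple $|mL|$ defines a birational morphism $\phi: X \to Z \hookrightarrow \bP^N$, which gives a well-defined notion of $L$-degree for subvarieties of $X$. I would prove a uniform upper bound
$$ (L|_Y)^{\dim Y} \le C(X,L) $$
for every subvariety $Y$ with $a(Y,L) > a(X,L)$, depending only on $(X,L)$. The idea is that the inequality $a(Y,L)>a(X,L)$ forces $Y$ to sit non-generically with respect to the Iitaka fibration (or MMP-output) of the boundary class $K_X+a(X,L)L$; combined with the uniruledness of $X$, this geometric constraint should bound $\Vol(L|_Y)$ in terms of invariants of $(X,L)$ alone. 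Once the bound is in place, the bad $Y$'s lie in only finitely many bounded Chow families, so only finitely many $V_j$ arise, and $W:=\bigcup_j V_j$ is a finite union of proper closed subsets --- hence a proper closed subset, as required.

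The main obstacle will be proving the uniform volume bound: controlling $\Vol(L|_Y)$ from above uniformly in $Y$. Without some input of this form, the family of bad subvarieties could a priori have unbounded degree, and the countable union of \cite{LTT} cannot be collapsed. Semiampleness of $L$ is used twice in overcoming this: once to define degree via an embedding furnished by $|mL|$, and once to ensure enough positivity so that the MMP and Iitaka-fibration analysis of the pseudo-effective class $K_X+a(X,L)L$ yield the required geometric structure on the bad locus.
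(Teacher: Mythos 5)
Your proposal identifies the right goal (collapse the countable union of \cite[Theorem 1.1]{LTT} to a single closed subset by a boundedness argument), but the core step you flag as the ``main obstacle''---a uniform upper bound $(L|_Y)^{\dim Y}\le C(X,L)$ for \emph{all} bad subvarieties $Y$---is not proved, not sketched concretely, and is in fact not what the paper proves. Such a uniform degree bound is not expected to hold: once one knows all bad $Y$ lie in a fixed proper closed $W$, nothing prevents bad subvarieties inside $W$ from having arbitrarily large $L$-degree. The conclusion of the theorem is strictly weaker than a degree bound on the $Y$'s themselves, and trying to prove the stronger statement leaves you stuck at exactly the point you acknowledge is unresolved.

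The paper takes a genuinely different route which sidesteps this. It does \emph{not} bound the bad $Y$'s directly; instead Proposition \ref{bounded prop} produces a bounded family $\mathcal U$ such that every bad $Y$ (outside $\mathbf B_+(L)$) is \emph{dominated} by members $Z$ of $\mathcal U$ with $a(Z,L)=a(Y,L)$. One then applies Theorem \ref{4.2} to $\mathcal U$: the bad members of $\mathcal U$ lie in a proper closed $W$, hence so does every bad $Y$ they cover. The family $\mathcal U$ is built by increasing induction on $\dim Y$, splitting via Proposition \ref{4.6} into (1) $Y$ covered by lower-dimensional subvarieties with the same $a$-constant (handled by induction), or (2) $Y$ birational to a $\bQ$-factorial terminal Fano $Y'$ of Picard number $1$. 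In case (2), if $(-K_{Y'})^{\dim Y}$ is small, the $L$-degree of $Y$ is bounded and Lemma \ref{4.7} applies; if it is large, one constructs exceptional non-klt centers through a general point (Proposition \ref{nklt}, Lemma \ref{w2}, the connectedness lemma) and uses Kawamata subadjunction (Theorem \ref{subadj}) to produce positive-dimensional subvarieties $V_p$ dominating $Y$ with $a(V_p,L)\ge c_0\,a(Y,L)$. Crucially, the gap constant $c_0<1$ is chosen using Di Cerbo's finiteness of pseudo-effective thresholds (Theorem \ref{DC36}, Corollary \ref{finite}, which rests on Birkar's boundedness result) so that $a(V_p,L)\ge c_0\,a(Y,L)$ forces $a(V_p,L)\ge a(Y,L)$, and one inducts. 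None of these ingredients---the non-klt center construction, subadjunction, Di Cerbo's finiteness, or the dominance-rather-than-containment reformulation---appear in your sketch, and without them the argument does not close.
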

Note that this result is proven in \cite[Theorem 1.2]{LTT} assuming that  a weak version of the BAB conjecture holds in dimension $n-1=\dim X -1$. We expect that Theorem \ref{main} holds also if we just assume that $L$ is nef and big (rather than big and semiample).

Our idea is to replace the WBAB conjecture assumed in  \cite[Theorem 1.2]{LTT} by constructing non-klt centers (see Proposition \ref{nklt}) and applying finiteness of $a$-constants (see Corollary \ref{finite}). This is an application of a recent result of Di Cerbo \cite{DC} based on a boundedness result proved by Birkar \cite{B}.

{\bf Acknowledgment.}  The essential ideas in this note originated during a visit of the second author to the University of Utah in February 2015. The second author is grateful for the hospitality. The authors would like to thank Brian Lehmann for useful comments.

\section{Preliminaries}
In this paper we work over the field of complex numbers $\mathbb C$.

\subsection{Facts on $a$-constants}
In this subsection, for the convenience of the reader,  we collect several facts about $a$-constants that were proven in \cite{LTT}.
\begin{prop}[{\cite[Proposition 4.1]{LTT}}] \label{4.1}
Let $X$ be a smooth projective variety and $L$ a big and nef $\bQ$-divisor. Let $\UU \to W$ be a family of subvarieties of $X$ such that $\UU\to X$ is dominant. Then a general member $Y$ of the family $\UU$ satisfies $a(Y, L)\leq    a(X, L)$.
\end{prop}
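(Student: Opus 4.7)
The plan is to reduce via resolutions to a smooth model of the family and then decompose the divisor class $K_{\tilde{Y}}+t\tilde{f}^*L|_{\tilde{Y}}$ as a big class plus a pseudoeffective one coming from normal-bundle positivity. First I replace $\UU$ and $W$ by smooth resolutions so that the family map $\tilde{\pi}\colon\tilde{\UU}\to\tilde{W}$ is smooth along the generic fiber and a general fiber $\tilde{Y}$ maps birationally under $\tilde{f}\colon\tilde{\UU}\to X$ onto the corresponding subvariety $Y\subset X$. By birational invariance of the $a$-invariant, $a(Y,L)=a(\tilde{Y},\tilde{f}^*L|_{\tilde{Y}})$, so it suffices to show that $K_{\tilde{Y}}+t\tilde{f}^*L|_{\tilde{Y}}$ is big on $\tilde{Y}$ for every $t>a(X,L)$.

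Since $\tilde{Y}$ is a general fiber of a smooth morphism, its normal bundle $N_{\tilde{Y}/\tilde{\UU}}$ is trivial and adjunction gives $K_{\tilde{Y}}=K_{\tilde{\UU}}|_{\tilde{Y}}$. Writing $K_{\tilde{\UU}}=\tilde{f}^*K_X+K_{\tilde{\UU}/X}$ yields the decomposition
\[
K_{\tilde{Y}}+t\tilde{f}^*L|_{\tilde{Y}}=\tilde{f}^*(K_X+tL)|_{\tilde{Y}}+K_{\tilde{\UU}/X}|_{\tilde{Y}}.
\]
For the first summand, bigness of $K_X+tL$ (since $t>a(X,L)$) lets me write $K_X+tL\sim_{\bQ}A+E$ with $A$ ample and $E$ effective. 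A general $\tilde{Y}$ is not contained in $\tilde{f}^{-1}(\Supp E)$---otherwise the general fibers of $\tilde{\pi}$, which sweep out a dense open subset of $\tilde{\UU}$, would all lie in $\tilde{f}^{-1}(\Supp E)$, forcing $\Supp E=X$ by dominance of $\tilde{f}$, a contradiction. So $\tilde{f}^*E|_{\tilde{Y}}$ is effective, and $\tilde{f}^*A|_{\tilde{Y}}=(\tilde{f}|_{\tilde{Y}})^*(A|_Y)$ is the birational pullback of an ample class and is therefore big and nef; thus the first summand is big.

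The main obstacle is to prove that the second summand $K_{\tilde{\UU}/X}|_{\tilde{Y}}$ is pseudoeffective; granted this, the whole class is a sum of a big class and a pseudoeffective class, so big. Unwinding, $K_{\tilde{\UU}/X}|_{\tilde{Y}}=K_{\tilde{Y}}-(\tilde{f}|_{\tilde{Y}})^*K_X$, and adjunction (applied on the smooth part of $Y\subset X$, combined with the resolution $\tilde{Y}\to Y$) shows that this class equals the pullback of $c_1(N_{Y/X})$ plus an effective contribution. The crucial input is therefore pseudoeffectivity of $c_1(N_{Y/X})$ for a general member of a covering family, which I would prove via Kodaira--Spencer: since $\UU\to X$ is dominant, the differential of $\tilde{f}$ at a generic point $u\in\tilde{\UU}$ is surjective, and decomposing via the smooth fibration $\tilde{\pi}$ yields surjectivity of the evaluation map $T_{\tilde{w}}\tilde{W}\to N_{Y/X}(y)$ at a general $y\in Y$. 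Thus $N_{Y/X}$ is generated at a general point by sections coming from Kodaira--Spencer, and a suitable wedge of these sections gives a nonzero global section of $\det N_{Y/X}=c_1(N_{Y/X})$, proving pseudoeffectivity. Combining both summands yields bigness of $K_{\tilde{Y}}+t\tilde{f}^*L|_{\tilde{Y}}$ for every $t>a(X,L)$, hence $a(Y,L)\leq a(X,L)$.
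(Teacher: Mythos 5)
The paper does not reprove this statement — it is cited verbatim from \cite[Proposition 4.1]{LTT} — so the comparison must be with the argument there. Your overall skeleton matches it: resolve so that $\tilde\pi$ is smooth, use triviality of $N_{\tilde Y/\tilde{\mathcal U}}$ for a general fiber to get $K_{\tilde Y}=K_{\tilde{\mathcal U}}|_{\tilde Y}$, decompose $K_{\tilde Y}+t\tilde f^*L|_{\tilde Y}=\tilde f^*(K_X+tL)|_{\tilde Y}+K_{\tilde{\mathcal U}/X}|_{\tilde Y}$, show the first summand is big for $t>a(X,L)$ (your argument for this, including why a general $\tilde Y$ avoids $\tilde f^{-1}(\Supp E)$, is fine), and show the second summand is pseudoeffective. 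The last step is the crux, and this is where your proposal has a genuine gap.

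You reduce pseudoeffectivity of $K_{\tilde{\mathcal U}/X}|_{\tilde Y}=K_{\tilde Y}-\tilde f^*K_X$ to pseudoeffectivity of $c_1(N_{Y/X})$ by asserting that the former equals ``the pullback of $c_1(N_{Y/X})$ plus an effective contribution,'' via adjunction on the smooth locus of $Y$ together with the resolution $\tilde Y\to Y$. This fails in general: when $Y$ is singular — which a general member of an arbitrary dominating family can well be — the extra term coming from $\tilde Y\to Y$ is the discrepancy divisor, which need not be effective unless $Y$ has, e.g., canonical singularities. Relatedly, the Kodaira--Spencer sections you produce live only over the immersion locus, and a nonvanishing section of $\det N_f$ on that open subset of $\tilde Y$ need not extend regularly to all of $\tilde Y$; and the identification of $T_wW$ with global sections of the normal sheaf presumes Hilbert-scheme structure that a bare ``family of subvarieties'' (e.g.\ a Chow family) does not automatically supply. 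The argument in \cite{LTT} sidesteps all of this with one cheap reduction: shrink $W$ to a subvariety $W'$ of dimension $\dim X-\dim Y$ still dominating $X$, so that after resolving, the evaluation $\tilde{\mathcal U}'\to X$ is a \emph{generically finite} morphism of smooth varieties. Then $K_{\tilde{\mathcal U}'/X}$ is the ramification divisor, hence effective, and its restriction to a general fiber is effective; since $K_{\tilde{\mathcal U}'/X}|_{\tilde Y}=K_{\tilde Y}-\tilde f^*K_X$ depends only on $\tilde Y\to X$ and not on which ambient family you used, this gives exactly the pseudoeffectivity you need, with no appeal to $N_{Y/X}$ or any smoothness of $Y$. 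Replacing your Kodaira--Spencer step by this reduction closes the gap.
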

\begin{thm}[{\cite[Theorem 4.2]{LTT}}]\label{4.2}
Let $X$ be a smooth projective variety and $L$ a big and
nef $\bQ$-divisor. Let $\pi : \mathcal{U} \to W$ be a family of subvarieties of $X$. There
exists a proper closed subset $V\subset X$ such that if a member $Y$ of the
family $\mathcal{U} $ satisfies $a(Y, L) > a(X, L)$ then $Y \subset V$.
\end{thm}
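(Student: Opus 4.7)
My plan is to prove Theorem \ref{4.2} by Noetherian induction on the dimension of $W$, using Proposition \ref{4.1} as the engine that reduces the base in each step. The intuition is that Proposition \ref{4.1} forces the ``bad'' members (those with $a(Y,L) > a(X,L)$) of a dominant family to concentrate over a proper closed subset of $W$; iterating reduces to a lower dimensional base parameter space, and each reduction contributes at most one proper closed subvariety of $X$, yielding the desired $V$ after finitely many steps.

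\textbf{Setup and base case.} After decomposing $W$ into irreducible components and treating them separately (the conclusion is preserved under finite unions), I assume $W$ is irreducible. When $\dim W = 0$, the family $\mathcal{U}$ consists of finitely many subvarieties $Y_1, \ldots, Y_r$ of $X$. For each index $i$ with $a(Y_i, L) > a(X,L)$ we must have $Y_i \ne X$, and one simply sets $V = \bigcup_i Y_i$ (ranging over those $i$), which is a proper closed subset of $X$.

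\textbf{Inductive step.} Assume the result for all families whose base has strictly smaller dimension. Consider the evaluation map $e \colon \mathcal{U} \to X$. If $e$ is not dominant, then $V := \overline{e(\mathcal{U})} \subsetneq X$ contains every member of the family, and we are done. Otherwise $e$ is dominant, and Proposition \ref{4.1} furnishes a Zariski open dense subset $W^\circ \subset W$ such that $a(Y_w, L) \leq a(X,L)$ for every $w \in W^\circ$. Let $W' = W \setminus W^\circ$, a proper closed subset of $W$, and apply the inductive hypothesis to the restricted family $\mathcal{U}|_{W'} \to W'$ to produce a proper closed $V' \subsetneq X$ containing every bad member over $W'$. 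Since every bad member of the original family lies over $W'$ by construction, $V := V'$ works.

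\textbf{Main obstacle.} The delicate point is guaranteeing that Proposition \ref{4.1} produces a genuinely Zariski open $W^\circ$, as opposed to a ``very general'' subset (the complement of a countable union of proper closed subsets). If only the latter were available, then $W'$ might fail to be a single proper closed subset and Noetherian induction would collapse. The remedy, which I would spell out before invoking the induction, is a constructibility argument: after passing to a simultaneous resolution $\widetilde{\mathcal{U}} \to \mathcal{U}$ (possibly stratifying $W$ into finitely many locally closed pieces to ensure flatness and smoothness of fibers), the bigness of $K_{\widetilde{Y}_w} + t L|_{\widetilde{Y}_w}$ for each rational $t$ is a constructible condition in $w$, so the function $w \mapsto a(\widetilde{Y}_w, L|_{\widetilde{Y}_w})$ is constructible on $W$. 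Since this function is birationally invariant, it computes $a(Y_w, L|_{Y_w})$, and its super-level set at value $a(X,L)$ is contained in a proper closed subset of $W$, giving the open $W^\circ$ needed above.
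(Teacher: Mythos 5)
The paper does not prove this theorem; it is imported verbatim from \cite[Theorem 4.2]{LTT}, so the comparison is against the argument in that reference. Your outline --- Noetherian induction on $\dim W$, splitting on whether the evaluation map $\mathcal U\to X$ is dominant, invoking Proposition~\ref{4.1} to drop to a smaller base, and upgrading ``general'' to ``Zariski-open'' via constructibility of the $a$-function --- is in fact the structure of the proof in \cite{LTT}, and you correctly identify the delicate point.

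The gap is precisely in the constructibility claim, which you assert but do not justify. Bigness of a line bundle is \emph{not} a constructible condition in flat families in general: it is the positivity of a volume defined as a limit, and $h^0$-semicontinuity only controls the behavior at each fixed level of the pluri-system, not the limit. What actually makes this work is that the divisor in question is adjoint: after stratifying $W$ and taking a simultaneous resolution so that $\widetilde{\mathcal U}\to W^\circ$ is a smooth projective family carrying the pullback of $L$ (nef on fibers), the deformation invariance of plurigenera in the adjoint form of Siu/P\u{a}un/Takayama gives that $h^0\bigl(\widetilde Y_w,\,m(qK_{\widetilde Y_w}+pL)\bigr)$ is \emph{independent} of $w$ for every $m$, hence $\mathrm{vol}(K_{\widetilde Y_w}+tL)$ and therefore $a(\widetilde Y_w,L)$ are \emph{constant} on each stratum. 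This is strictly stronger than, and is what you actually need instead of, ``$\{w:K_{\widetilde Y_w}+tL\ \text{big}\}$ is constructible for each rational $t$'': with only the latter, the intersection over infinitely many $t$ could fail to be constructible and the super-level set of $a$ need not lie in a proper closed subset. Rewriting the middle paragraph of your ``main obstacle'' to cite adjoint invariance of plurigenera and conclude constancy on strata (rather than constructibility of individual level sets) would close the gap and line you up with the proof in \cite{LTT}.
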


\begin{prop}[{\cite[Proposition 4.6]{LTT}}]\label{4.6}
Let $X$ be a smooth uniruled projective variety and $L$ a big and nef $\bQ$-divisor. Then either
\begin{enumerate}
\item $X$ is covered by  proper subvarieties Y satisfying $a(Y, L) = a(X, L)$, or
\item $X$ is birational to a $\bQ$-factorial terminal Fano variety $X'$ of
Picard number $1$.
\end{enumerate}
\end{prop}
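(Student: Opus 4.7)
Since $X$ is smooth and uniruled, $K_X$ is not pseudo-effective, so $a := a(X,L) > 0$. By definition of the $a$-constant, $K_X + aL$ is pseudo-effective (being a limit of the big divisors $K_X + tL$, $t > a$) but not big. The plan is to run an MMP for $K_X + aL$ and read off the two alternatives from the output.

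Using Kodaira's lemma on the big and nef divisor $L$ together with a standard perturbation, I would first realise $aL$ as a $\bQ$-linearly equivalent effective $\bQ$-divisor $\Delta$ with arbitrarily small coefficients, so that $(X, \Delta)$ is a terminal pair with $\Delta$ big. Since $K_X + \Delta \sim_\bQ K_X + aL$ is pseudo-effective, BCHM produces a $(K_X + \Delta)$-MMP terminating at a $\bQ$-factorial terminal minimal model $\phi : X \dashrightarrow X'$ with $K_{X'} + \Delta'$ nef. Writing $L' = \phi_{\ast} L$, we have $K_{X'} + aL' \equiv K_{X'} + \Delta'$. Because $\Delta'$ is still big, log abundance for klt pairs with big boundary (a standard consequence of the base-point-free theorem after absorbing an ample piece into $\Delta'$) yields semiampleness of $K_{X'} + \Delta'$, and the associated contraction $\psi : X' \to Z$ has general fibre $F$ satisfying $(K_{X'} + aL')|_F \equiv 0$.

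If $\dim Z \geq 1$, adjunction on a general fibre gives $K_F + aL'|_F \equiv 0$, so $a(F, L'|_F) \geq a$; pulling these fibres back to proper subvarieties $F_0 \subset X$ via $\phi$ produces a dominant family, and Proposition \ref{4.1} together with birational invariance of the $a$-constant yields $a(F_0, L|_{F_0}) \leq a$, hence equality. This is alternative (1). If instead $\dim Z = 0$, then $K_{X'} + aL' \equiv 0$, so $-K_{X'} \equiv aL'$ is big and nef and $X'$ is a $\bQ$-factorial terminal weak Fano. I would then run a $K_{X'}$-MMP: since $-K_{X'}$ is big, no minimal model is produced, and the MMP terminates at a Mori fibre space $X'' \to W$ with $X''$ still $\bQ$-factorial terminal. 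The numerical relation $K + aL \equiv 0$ is preserved through every divisorial contraction and flip, so $K_{X''} + aL'' \equiv 0$. If $\dim W = 0$, then $X''$ has Picard number $1$ and $-K_{X''}$ is ample, giving alternative (2). If $\dim W > 0$, a general fibre $F$ of the Mori fibration satisfies $K_F + aL''|_F \equiv 0$ by adjunction, and strict transforms in $X$ again furnish a covering family realising alternative (1).

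The main technical obstacle is tracking the divisor $L$ through the two successive MMPs: verifying that the numerical relation $K + aL \equiv 0$ survives each elementary step (and that $L'$ remains big so that the $a$-constants on fibres are well-defined), and that the general fibres of $\psi$ and of the Mori fibration correspond under the birational maps to a genuine covering family on $X$ to which birational invariance of the $a$-constant applies. Invoking log abundance for a pair with big boundary is another nontrivial input, but is standard within the BCHM framework.
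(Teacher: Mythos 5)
First, a point of orientation: this paper does not prove Proposition \ref{4.6} at all; it is imported verbatim from \cite{LTT} and used as a black box. Your outline --- realise $a(X,L)L$ by a klt boundary $\Delta$, run the $(K_X+\Delta)$-MMP via BCHM to a minimal model $\phi\colon X\dashrightarrow X'$, use bigness of the boundary to get semiampleness and a fibration $\psi\colon X'\to Z$, and in the case $K_{X'}+aL'\sim_\bQ 0$ run a $K_{X'}$-MMP to a Mori fibre space, reading off alternative (2) when the base is a point and alternative (1) from fibres otherwise --- is essentially the argument of the cited source, so the architecture is the right one.

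There is, however, a genuine gap, exactly at the step you flag and then dispatch with ``birational invariance of the $a$-constant''. Proposition \ref{4.1} does give $a(F_0,L|_{F_0})\leq a(X,L)$ for the general member $F_0\subset X$ of the induced covering family, but the reverse inequality does not follow from birational invariance: that principle compares $a(Y,\nu^*L)$ with $a(Y',L)$ for a pullback divisor, whereas $L|_{F_0}$ and $(\phi_*L)|_F$ are \emph{not} pullbacks of one another under $F_0\dashrightarrow F$ --- your MMP only controls the adjoint class $K+aL$, and on a common resolution $p\colon V\to X$, $q\colon V\to X'$ the difference $p^*L-q^*L'$ has no sign. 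So from $K_F+aL'|_F\equiv 0$ you cannot yet conclude $a(F_0,L|_{F_0})\geq a(X,L)$, which is the inequality that produces the equality in alternative (1). The fix is a concrete negativity computation: MMP-negativity gives $p^*(K_X+aL)\sim_\bQ q^*(K_{X'}+aL')+E$ with $E\geq 0$ exceptional over $X'$, and writing $K_V=p^*K_X+E_p=q^*K_{X'}+E_q$ (with $E_p,E_q\geq 0$, both exceptional over $X'$ since the MMP extracts no divisors) one obtains, for $0<t\leq a$, $K_V+tp^*L\sim_\bQ q^*(K_{X'}+tL')+\Theta_t$ with $\Theta_t=(1-\tfrac{t}{a})E_q+\tfrac{t}{a}(E+E_p)\geq 0$ exceptional over $X'$. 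Restricting to a general fibre $F_V$ of $V\to Z$ (where $\Theta_t$ restricts to a divisor exceptional for $F_V\to F$) and pushing forward to $F$ shows that if $K_{F_V}+t(p^*L)|_{F_V}$ were big for some $t<a$, then $(K_{X'}+tL')|_F\sim_\bQ-(a-t)L'|_F$ would be big, which is absurd since $L'|_F$ is big; hence $a(F_0,L|_{F_0})\geq a(X,L)$. The same computation, with $L'|_F$ replaced by the relatively ample $L''$ on the fibres of the Mori fibration, is what closes the second case as well --- and it is needed there, since no ample divisor pulled back from $Z$ is available. The remaining ingredients of your sketch (preservation of $K+aL\equiv 0$ under pushforward, terminality of the models, semiampleness from big boundary) are fine, but this transfer step must be written out (or quoted from \cite{LTT}) rather than attributed to birational invariance.
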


\begin{lem}[{\cite[Lemma 4.7]{LTT}}]\label{4.7}
Let $X$ be a smooth projective variety and $L$ a big and
nef $\bQ$-divisor on $X$. Fix a constant $C$. Then the subset of $\Chow(X)$
parametrizing subvarieties of $X$ that are not contained in ${\bf B}_+(L)$ and
are of $L$-degree at most $C$ is bounded.
\end{lem}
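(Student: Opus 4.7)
The plan is to reduce boundedness for the big and nef class $L$ to classical boundedness for an ample class, by exploiting a Kodaira-type decomposition adapted to the augmented base locus.

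First I would recall the standard characterization of $\mathbf{B}_+(L)$: since $L$ is big, there exist an ample $\bQ$-divisor $A$ and an effective $\bQ$-divisor $E$ on $X$ with $L \sim_\bQ A + E$, and by definition of the augmented base locus one may arrange that $\Supp(E) \subseteq \mathbf{B}_+(L)$ (this is the content of Nakamaye's description of $\mathbf{B}_+$ for a big and nef divisor, together with the fact that $\mathbf{B}_+(L)$ is the intersection over all such decompositions of $\Supp(E)$). Fix once and for all one such decomposition $L \sim_\bQ A + E$.

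Next, let $Y$ be any subvariety with $Y \not\subset \mathbf{B}_+(L)$ and $\dim Y = d$. Because $Y \not\subset \Supp(E)$, the restriction $E|_Y$ is a well-defined effective $\bQ$-Cartier divisor on $Y$ (after clearing denominators). Since $A$ is ample, $A|_Y$ is ample, hence nef; since $E|_Y$ is effective, all mixed intersections of $A|_Y$ and $E|_Y$ on $Y$ are nonnegative. Expanding
\[
(L|_Y)^d = \bigl((A+E)|_Y\bigr)^d = \sum_{i=0}^{d} \binom{d}{i} (A|_Y)^i \cdot (E|_Y)^{d-i},
\]
each summand is nonnegative, so in particular $(A|_Y)^d \leq (L|_Y)^d \leq C$. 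Thus the hypothesis on the $L$-degree of $Y$ forces a uniform bound on its $A$-degree by the same constant $C$.

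Finally I would invoke the classical fact that, for a fixed ample divisor $A$ on a projective variety $X$ and a fixed constant $C$, the locus in $\Chow(X)$ parametrizing subvarieties of $A$-degree at most $C$ is of finite type over $\bC$: for each dimension $d \leq \dim X$, the Chow variety $\Chow_d(X)$ decomposes into components of fixed $A$-degree, and only finitely many components have $A$-degree $\leq C$. Taking the union over $d = 0, 1, \dots, \dim X$ still yields a bounded family, proving the lemma.

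The only nontrivial input is the existence of the decomposition $L \sim_\bQ A + E$ with $\Supp(E) \subseteq \mathbf{B}_+(L)$; once this is in hand, the degree-comparison step and the reduction to the ample case are essentially formal. So the main conceptual obstacle is setting up the right decomposition tied to $\mathbf{B}_+(L)$, and the rest is a clean nef-versus-effective intersection-theoretic inequality combined with a standard boundedness statement for ample polarizations.
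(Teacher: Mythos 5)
There is a genuine gap at the very first step. You claim that one may choose a single decomposition $L\sim_\bQ A+E$ with $A$ ample, $E\geq 0$ and $\Supp(E)\subseteq \mathbf{B}_+(L)$. This is false in general: for every such decomposition one has the \emph{opposite} containment $\mathbf{B}_+(L)\subseteq \Supp(E)$, and the characterization of $\mathbf{B}_+(L)$ as an intersection of the loci $\Supp(E)$ over \emph{all} decompositions does not produce a single decomposition attaining it. Indeed, $\mathbf{B}_+(L)$ can have codimension $\geq 2$ while $L$ is not ample (take $X$ a smooth threefold with a small contraction $f\colon X\to Z$ contracting a curve $C$, and $L=f^*H$ with $H$ ample; then $\mathbf{B}_+(L)=C$ by Nakamaye, and no nonzero effective divisor is supported on $C$), so no decomposition with $\Supp(E)\subseteq \mathbf{B}_+(L)$ exists. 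Since your whole argument hinges on $E|_Y$ being a well-defined effective divisor for \emph{every} $Y\not\subset\mathbf{B}_+(L)$ with respect to one fixed $A$, and a decomposition chosen per $Y$ would make $A$ vary (destroying the uniform degree bound), this is a real gap, not a cosmetic one. A second, independent error is the positivity claim in the binomial expansion: terms $(A|_Y)^i\cdot(E|_Y)^{d-i}$ with $d-i\geq 2$ involve self-intersections of an effective, non-nef divisor and can be negative (e.g.\ $E|_Y^2<0$ on a surface), so "each summand is nonnegative" is not justified.

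Both defects can be repaired along the lines you intend. Fix an ample $\bQ$-divisor $A$ and a rational $\epsilon>0$ small enough that $\mathbf{B}_+(L)=\mathbf{B}(L-\epsilon A)$, and fix $m$ with $\mathbf{B}(L-\epsilon A)={\rm Bs}\,|m(L-\epsilon A)|$. Then any $Y\not\subset\mathbf{B}_+(L)$ of dimension $d$ is not contained in the base locus, so $(L-\epsilon A)|_Y$ is $\bQ$-linearly equivalent to an effective divisor on $Y$, and the telescoping identity $(L^d-(\epsilon A)^d)\cdot Y=\bigl((L-\epsilon A)\cdot\sum_{i}L^i(\epsilon A)^{d-1-i}\bigr)\cdot Y\geq 0$ (each factor $L^i(\epsilon A)^{d-1-i}$ is a product of nef classes, and only one factor of the effective class appears) gives $A^d\cdot Y\leq \epsilon^{-d}C$ with $A$ and $\epsilon$ fixed; your final reduction to boundedness of Chow components of bounded ample degree then goes through. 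Note also that the paper itself gives no proof of this lemma, quoting it directly from \cite[Lemma 4.7]{LTT}, so your argument is being judged on its own; as written it does not close.
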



\subsection{Non-klt centers}
We follow the standard notation and conventions of the minimal model program, see eg. \cite{SOP}.
\begin{definition} Let $(X,\Delta )$ be a pair so that $X$ is a normal variety, $\Delta$ is an effective $\mathbb Q$-divisor, and $K_X+\Delta$ is $\mathbb Q$-Cartier.
We say that a subvariety $V\subset X$ is a {\it non-klt center} of $(X,\Delta)$ if it is
the image of a divisor of  discrepancy at most $-1$. We will denote
by $\Nklt(X, \Delta)$ the union of all non-klt centers of $(X,\Delta)$. A {\it non-klt place} is a valuation corresponding to a
divisor of discrepancy at most $-1$. A non-klt center is {\it pure} if $K_X + \Delta$ is log canonical at the generic point of $V$. If moreover there
is a unique non-klt place lying over the generic point of $V$, we
will say that $V$ is an {\it exceptional} non-klt center.
\end{definition}
The following is a weak form of Kawamata's subadjunction theorem.
\begin{thm}[Subadjunction, see {\cite[Proposition 5.1]{JXD}}]\label{subadj}
Let $V\subset X$ be a non-klt center  of a pair $(X,\Delta)$ which is lc at a general point of $V$. Let $\nu: {V^\nu}\to  V$ be the normalization.
Then there is an effective $\bQ$-divisor $\Delta_{{V}^\nu}$ on ${{V}^\nu}$ such that
$$\nu^*(K_X +\Delta)|_{V_\nu}\sim_\bQ  K_{{V}^\nu}+\Delta_{V^\nu}.$$
\end{thm}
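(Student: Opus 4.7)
\textbf{Proof Plan for Theorem \ref{subadj}.}

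The plan is to reduce to ordinary divisorial adjunction on a log resolution and then descend the resulting formula to $V^\nu$ via Stein factorization. First, I would take a log resolution $\mu : Y \to X$ of $(X,\Delta)$ together with the ideal sheaf of $V$, so that the support of $\mu^{-1}(V) \cup \mu_*^{-1}\Delta \cup \mathrm{Exc}(\mu)$ is simple normal crossing. Write
\[
K_Y + \Gamma = \mu^*(K_X+\Delta),
\]
decompose $\Gamma = \Gamma^{=1} + \Gamma^{<1} - \Gamma^{<0}$ according to coefficients, and use the hypothesis that $V$ is a non-klt center which is lc at its general point to pick an irreducible component $E$ of $\Gamma^{=1}$ with $\mu(E)=V$ and coefficient exactly $1$ in $\Gamma$. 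A mild perturbation of $\Delta$ (tie-breaking against a general ample divisor vanishing to high order along $V$) lets us assume $E$ is the only non-klt divisor dominating $V$, so the other components of $\Gamma$ either avoid $E$ generically or are pulled back from $V^\nu$.

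Next, I would apply the standard divisorial adjunction on the smooth prime divisor $E\subset Y$:
\[
\bigl(K_Y + \Gamma\bigr)\big|_E \;=\; K_E + \Gamma_E, \qquad \Gamma_E := (\Gamma - E)\big|_E .
\]
The component $\Gamma^{<0}$ is $\mu$-exceptional, and by further blowing up one can arrange that no such exceptional divisor meets $E$ in codimension zero over $V$; equivalently, one chooses the log resolution so that $\Gamma^{<0}|_E$ has no horizontal components. Then $\Gamma_E = \Gamma^{=1}|_E - E|_E + \Gamma^{<1}|_E + (\text{vertical corrections})$, and after restricting further to the generic point of $V$ the non-effective contributions sit over proper subsets of $V^\nu$. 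The upshot is that $\Gamma_E$ is effective over a nonempty open subset of $V$, which is all that is needed for the weak form.

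Now consider the morphism $g : E \to V$. Take its Stein factorization $E \xrightarrow{g'} V' \xrightarrow{\tau} V$: then $V'$ is normal and $\tau$ is finite. Since $V^\nu$ is the normalization of $V$, $\tau$ factors through $V^\nu$, and in fact $V'=V^\nu$ in the situation we need (after choosing $E$ to be a non-klt place whose residue field equals the function field of $V^\nu$, which one can arrange by blowing up further). Thus I would push down the adjunction formula along $g'$: for a general fiber $F$ of $g'$ one has $(K_E + \Gamma_E)|_F = K_F + \Gamma_E|_F$, and one defines $\Delta_{V^\nu}$ as the discriminant part of the canonical bundle formula
\[
K_E + \Gamma_E \;\sim_\bQ\; (g')^*\bigl(K_{V^\nu} + \Delta_{V^\nu} + M_{V^\nu}\bigr).
\]
For the weak statement one only needs $\Delta_{V^\nu}$ effective; the discriminant part is effective by its very definition (it measures lc thresholds of the pair along divisors of $V^\nu$), and the moduli part $M_{V^\nu}$ is pseudo-effective, so after absorbing $M_{V^\nu}$ into a $\bQ$-linearly equivalent effective representative one obtains the claimed effective $\Delta_{V^\nu}$ satisfying $\nu^*(K_X+\Delta)|_{V^\nu}\sim_\bQ K_{V^\nu}+\Delta_{V^\nu}$.

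The main obstacle is the passage from adjunction on $E$ to a formula on $V^\nu$: ensuring that $E$ can be chosen so that $g'$ equals $E \to V^\nu$ (rather than a nontrivial finite cover), and verifying that the non-effective contributions in $\Gamma_E$ all pushforward to divisors supported in a proper subset, so that the discriminant collects an effective divisor. This is exactly where Kawamata-type positivity of the moduli part (or, more elementarily, the positivity results behind Ambro/Fujino's canonical bundle formula) enters and is the only nontrivial input; once this is in hand the required formula follows, which is why the weak form holds in the generality stated.
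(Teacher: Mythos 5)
The paper does not actually prove this statement: it is quoted verbatim from \cite[Proposition 5.1]{JXD}, whose proof rests on the canonical bundle formula package of Kawamata, Ambro and Fujino--Gongyo. Your sketch follows the same general route (restrict $K_X+\Delta$ to a non-klt place $E$ over $V$, run the canonical bundle formula for $E\to V^\nu$, take $\Delta_{V^\nu}$ to be discriminant plus moduli), but it treats as formalities exactly the points where the real content lies, and two of your justifications are incorrect. First, the discriminant is \emph{not} ``effective by its very definition'': the sub-boundary $\Gamma_E=(\Gamma-E)|_E$ has negative components coming from $\mu$-exceptional divisors of positive discrepancy, so the relevant log canonical thresholds over a prime divisor $P\subset V^\nu$ can a priori exceed $1$; effectivity of the discriminant when $\Delta\geq 0$ is a lemma that must be proved (or cited), not a tautology. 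Second, and more seriously, pseudo-effectivity (or even nefness) of the moduli part does not allow you to ``absorb $M_{V^\nu}$ into a $\bQ$-linearly equivalent effective representative'' --- a pseudo-effective class need not contain any effective $\bQ$-divisor. This is precisely why Kawamata's subadjunction carries the $\epsilon H$ term; to get the exact $\bQ$-linear equivalence stated here one needs the much stronger b-nef and abundant (or b-semi-ample) statements for the moduli b-divisor due to Ambro and Fujino--Gongyo, and the $\epsilon H$ trick cannot be substituted, since it changes the divisor class.

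Two further steps would fail as written. The tie-breaking perturbation replaces $\Delta$ by a different divisor, hence changes the class $K_X+\Delta$ whose restriction you must compute; it cannot simply be ``assumed'' without undoing it at the end (and it is not needed). And the reduction of the Stein factorization $E\to V'\to V$ to the case $V'=V^\nu$ is not achievable ``by blowing up further'': blow-ups of $Y$ do not change the algebraic closure of $\CC(V)$ in $\CC(E)$, and by Koll\'ar's theory of sources and springs the finite cover $V'\to V^\nu$ is an invariant of the lc center, which can be nontrivial for every choice of non-klt place over $V$. The finite part has to be dealt with directly, e.g.\ by pushing the formula down $\tau:V'\to V^\nu$ using Riemann--Hurwitz and the norm map, which does preserve effectivity. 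So while your outline points at the right circle of ideas, the proposal as it stands assumes the two nontrivial inputs (effectivity of the discriminant and $\bQ$-effectivity of the moduli part) and contains an unfixable step in the reduction to connected fibers; the honest proof is the one in \cite{JXD} resting on \cite{Ambro} and the Fujino--Gongyo subadjunction results.
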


We have the following connectedness lemma of Koll\'{a}r and Shokurov for the non-klt locus (cf.  
Shokurov \cite{Shokurov}, Koll\'{a}r \cite[17.4]{Kol92}).
\begin{thm}[Connectedness Lemma]
Let $f:X\rightarrow Z$ be a proper morphism of normal varieties with connected fibers and $D$  a $\bQ$-divisor such that $-(K_X+D)$ is $\bQ$-Cartier, $f$-nef, and $f$-big. Write $D=D^+-D^-$ where $D^+$ and $D^-$ are effective with no common components. If $D^-$ is $f$-exceptional (i.e. all of its components have image of codimension at least $2$), then ${\rm Nklt} (X,D)\cap f^{-1}(z)$ is connected for any $z\in Z$. 
\end{thm}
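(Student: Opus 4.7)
The plan is to follow the classical Shokurov--Koll\'{a}r argument via Kawamata--Viehweg vanishing on a log resolution. First, I would take a log resolution $\mu\colon Y \to X$ of $(X, \Supp D)$ and set $g = f \circ \mu$. The discrepancy equation
\[
K_Y = \mu^*(K_X+D) + \mathcal{A}, \qquad \mathcal{A} := \sum_i a_i E_i,
\]
rearranges to $K_Y + \{-\mathcal{A}\} = \mu^*(K_X+D) + \lceil \mathcal{A} \rceil$. I would decompose $\lceil \mathcal{A} \rceil = P - N$ into effective divisors of disjoint support, where $P = \sum_{a_i > 0} \lceil a_i \rceil E_i$ and $N = \sum_{a_i \leq -1} (-\lceil a_i \rceil) E_i$. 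The non-$\mu$-exceptional components of $P$ are precisely the strict transforms of components of $D^-$, while $\mu(\Supp N) = \Nklt(X, D)$.

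Second, the identity $\lceil \mathcal{A} \rceil - K_Y = -\mu^*(K_X+D) + \{-\mathcal{A}\}$ expresses $\lceil \mathcal{A} \rceil - K_Y$ as the sum of a $g$-nef and $g$-big $\bQ$-divisor with a fractional $\bQ$-divisor of simple normal crossing support, so relative Kawamata--Viehweg vanishing yields $R^1 g_* \mathcal{O}_Y(P - N) = 0$. Pushing the exact sequence
\[
0 \to \mathcal{O}_Y(P - N) \to \mathcal{O}_Y(P) \to \mathcal{O}_N(P|_N) \to 0
\]
forward by $g$ thus gives a surjection $g_* \mathcal{O}_Y(P) \twoheadrightarrow g_* \mathcal{O}_N(P|_N)$.

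Third, I would identify $g_* \mathcal{O}_Y(P) = \mathcal{O}_Z$. Any section $\phi$ of $\mu_* \mathcal{O}_Y(P)$ is regular outside $\mu(\Supp P)$; the $\mu$-exceptional part of this image has codimension $\geq 2$ in the normal variety $X$, so Hartogs forces $\phi \in \mathcal{O}_X(\lceil D^- \rceil)$, yielding $\mu_* \mathcal{O}_Y(P) \subseteq \mathcal{O}_X(\lceil D^- \rceil)$. Since $D^-$ is $f$-exceptional, $f(\Supp \lceil D^- \rceil)$ has codimension $\geq 2$ in $Z$, and another application of Hartogs together with $f_* \mathcal{O}_X = \mathcal{O}_Z$ gives $f_* \mathcal{O}_X(\lceil D^- \rceil) = \mathcal{O}_Z$. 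With the trivial inclusion $\mathcal{O}_Z \hookrightarrow g_* \mathcal{O}_Y(P)$, we conclude $g_* \mathcal{O}_Y(P) = \mathcal{O}_Z$.

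The surjection thus becomes $\mathcal{O}_Z \twoheadrightarrow g_* \mathcal{O}_N(P|_N)$. The main obstacle is converting this cyclicity into fiberwise connectedness. I would appeal to the theorem on formal functions: the completed stalk $(g_* \mathcal{O}_N(P|_N))_z^\wedge$ is computed by $\varprojlim_n H^0(N_z^{(n)}, \mathcal{O}_N(P|_N))$ over infinitesimal neighborhoods of $N \cap g^{-1}(z)$ in $N$, and by effectiveness of $P|_N$ the inverse system $\varprojlim_n H^0(N_z^{(n)}, \mathcal{O}_N)$ injects into it. A disconnection of $N \cap g^{-1}(z)$ into $k \geq 2$ topological components would produce $k$ linearly independent idempotent sections of $\mathcal{O}_{N_z^{(n)}}$ for $n \gg 0$, hence $k$ independent elements of the completed stalk, contradicting its being a quotient of $\hat{\mathcal{O}}_{Z,z}$. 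Therefore $N \cap g^{-1}(z)$ is connected, and since $\mu$ is proper with $\mu(\Supp N) = \Nklt(X, D)$, the image $\Nklt(X, D) \cap f^{-1}(z) = \mu(N \cap g^{-1}(z))$ is connected as well.
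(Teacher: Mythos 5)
The paper does not prove this Connectedness Lemma; it simply cites Shokurov \cite{Shokurov} and Koll\'ar \cite[17.4]{Kol92}. Your argument is the standard Shokurov--Koll\'ar proof that those references give, and the first three paragraphs (log resolution, the decomposition $\lceil\mathcal{A}\rceil = P - N$, relative Kawamata--Viehweg vanishing, and the identification $g_*\mathcal{O}_Y(P)=\mathcal{O}_Z$ via the two Hartogs steps using normality of $X$ and $Z$) are correct and match the standard treatment.

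The final paragraph, however, is not quite right as written. You obtain $k$ orthogonal idempotents in the ring $R:=\varprojlim_n H^0(N_z^{(n)},\mathcal{O}_{N_z^{(n)}})$ and then claim that ``$k$ independent elements of the completed stalk'' contradicts $(g_*\mathcal{O}_N(P|_N))_z^\wedge$ being a quotient of $\hat{\mathcal{O}}_{Z,z}$. This is not a contradiction: a cyclic module over $\hat{\mathcal{O}}_{Z,z}$ (which is infinite-dimensional) can certainly contain $k$ linearly independent elements, and orthogonal idempotents summing to $1$ need not even be linearly independent over the base in the relevant sense. What actually does the work, and what you should say, is the following. Completion is exact on coherent sheaves, so $R=(g_*\mathcal{O}_N)_z^\wedge$ sits inside $M:=(g_*\mathcal{O}_N(P|_N))_z^\wedge$ as an $\hat{\mathcal{O}}_{Z,z}$-submodule, and the cyclic generator $m_0\in M$ (the image of $1\in\mathcal{O}_Z$) is precisely $1_R\in R$, because the two composites $\mathcal{O}_Z\to g_*\mathcal{O}_Y(P)\to g_*\mathcal{O}_N(P|_N)$ and $\mathcal{O}_Z\to g_*\mathcal{O}_N\to g_*\mathcal{O}_N(P|_N)$ agree. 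Hence $R\subseteq\hat{\mathcal{O}}_{Z,z}\cdot 1_R$, i.e.\ the structure map $\hat{\mathcal{O}}_{Z,z}\to R$ is \emph{surjective}; equivalently $\mathcal{O}_Z\twoheadrightarrow g_*\mathcal{O}_N$ (which one can also see directly from the commutative square, since $\mathcal{O}_Z\to g_*\mathcal{O}_N(P|_N)$ is onto and $g_*\mathcal{O}_N\to g_*\mathcal{O}_N(P|_N)$ is injective). Therefore $R$ is a quotient of the local ring $\hat{\mathcal{O}}_{Z,z}$, hence local, hence has no nontrivial idempotents --- and \emph{this} is what your $k\geq 2$ idempotents contradict. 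Equivalently, once you know $\mathcal{O}_Z\twoheadrightarrow g_*\mathcal{O}_N$ you can skip formal functions entirely: the Stein factorization of $g|_N$ then exhibits $N\to\operatorname{Spec}_Z g_*\mathcal{O}_N\to Z$ with the first map having connected fibers and the second a closed immersion, so $N\cap g^{-1}(z)$ is connected, and its image $\Nklt(X,D)\cap f^{-1}(z)$ under the proper map $\mu$ is connected as well. Also note a small typo: the inverse system should be of $H^0(N_z^{(n)},\mathcal{O}_{N_z^{(n)}})$, not $\mathcal{O}_N$, and the injectivity of this system into the other is best justified by exactness of completion applied to $g_*\mathcal{O}_N\hookrightarrow g_*\mathcal{O}_N(P|_N)$ rather than level by level, since a local equation of $P|_N$ need not remain a nonzerodivisor on the non-reduced thickenings $N_z^{(n)}$.
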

We can use the following proposition to construct non-klt centers.
\begin{prop}[{cf. \cite[Lemma 3.2]{Lai}}]\label{nklt}
Let $X$ be  a  $\bQ$-factorial terminal Fano variety of dimension $n$.  Assume $(-K_X)^n>(wn)^n$ for some positive rational number $w$. Then for every point $p\in X$ there is an effective $\bQ$-divisor $\Delta_p\sim_\bQ -\frac{1}{w}K_X$ such that  the unique minimal non-klt center $V_p \subset \Nklt(X, \Delta_p)$ containing $p$ is
exceptional. 
\end{prop}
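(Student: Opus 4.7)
The plan is to construct a highly singular effective $\bQ$-divisor at $p$ via a volume computation, and then apply tie-breaking to ensure the minimal non-klt center through $p$ is exceptional.

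First, for $m$ sufficiently large and divisible, $-mK_X$ is Cartier and ample, so Kawamata--Viehweg vanishing together with asymptotic Riemann--Roch gives
$$h^0(X,-mK_X)=\chi(X,-mK_X)=\frac{m^n}{n!}(-K_X)^n+O(m^{n-1}).$$
A section of $-mK_X$ vanishing to order $\geq k$ at $p$ imposes at most $\binom{n+k-1}{n}=\tfrac{k^n}{n!}+O(k^{n-1})$ linear conditions at a smooth point, with an analogous estimate after a log resolution if $p$ is singular. Since $(-K_X)^n>(wn)^n$, for $m$ large enough there exists an effective $D_m\in|-mK_X|$ with $\mathrm{mult}_pD_m>wnm$. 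Setting $D:=\tfrac{1}{mw}D_m\sim_{\bQ}-\tfrac{1}{w}K_X$ yields $\mathrm{mult}_pD>n$, which combined with terminality of $X$ forces $(X,D)$ to be non-klt at $p$.

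Next, let $c:=\lct_p(X,D)\leq 1$, so $(X,cD)$ is lc at $p$ with $p\in\Nklt(X,cD)$. To restore the class $-\tfrac{1}{w}K_X$, I pick a general effective $H\sim_{\bQ}-\tfrac{1-c}{w}K_X$ avoiding $p$ and the finitely many non-klt centers of $(X,cD)$ through $p$; such $H$ exists because $-K_X$ is ample. The pair $(X,\Delta^{(0)}):=(X,cD+H)$ is then lc at $p$, lies in the desired $\bQ$-linear equivalence class, and shares its non-klt centers through $p$ with $(X,cD)$. By the standard theorem of Kawamata--Ambro on the structure of lc centers, the minimal non-klt center $V$ of $(X,\Delta^{(0)})$ through $p$ is already unique, so the only remaining issue is exceptionality.

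The main obstacle is guaranteeing that $V$ is exceptional, i.e., that a unique non-klt place of $(X,\Delta^{(0)})$ lies above its generic point. If multiple non-klt places exist, I would apply the Shokurov--Kawamata tie-breaking procedure as in \cite[Lemma 3.2]{Lai}: choose an auxiliary $A\sim_{\bQ}-\tfrac{1}{w}K_X$ with controlled singularities along $V$, obtained from sections of $|-mK_X|$ singular along $V$ (which exist by ampleness of $-K_X$), form the convex combination $(1-\epsilon)\Delta^{(0)}+\epsilon A$, and re-take the log canonical threshold at $p$. For suitably chosen $\epsilon$, this extinguishes all but one non-klt place above $V$, producing the required $\Delta_p\sim_{\bQ}-\tfrac{1}{w}K_X$ with minimal non-klt center $V_p\subseteq V$ exceptional. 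The delicate point is arranging the perturbation so as to preserve the prescribed $\bQ$-linear equivalence class while genuinely isolating a single non-klt place, which is why both $D$ and the auxiliary $A$ must be chosen proportional to $-K_X$.
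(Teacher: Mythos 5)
Your strategy coincides with the paper's: produce a $\bQ$-divisor $D\sim_\bQ-\tfrac1w K_X$ that is highly singular at $p$ by a Riemann--Roch count, rescale by the log canonical threshold, pad back up to the class $-\tfrac1w K_X$ using ampleness of $-K_X$, and tie-break to make the minimal non-klt center through $p$ exceptional. The paper simply delegates the multiplicity construction to Koll\'ar's Theorem~6.7.1 and the tie-breaking to Ambro's Proposition~3.2 and Lemma~3.4, and it builds in the slack up front by fixing $w'$ with $(-K_X)^n>(w'n)^n>(wn)^n$ so that a single padding at the very end suffices; you pad after the lct step and would need to pad once more after the tie-breaking scales things down, which works but is slightly more iterative.

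One caution: ``combined with terminality of $X$'' in your multiplicity-implies-non-klt step is a red herring that points the wrong way. At a smooth point, $\mathrm{mult}_p D>n$ alone forces $(X,D)$ to be non-lc (hence non-klt) at $p$ with no input from terminality, whereas at a singular terminal point the positive discrepancies make the multiplicity criterion \emph{harder}, not easier, to meet, and ``an analogous estimate after a log resolution'' is genuinely more delicate than you indicate. (Koll\'ar's Theorem~6.7.1, which the paper cites, is likewise stated for smooth points; this is harmless for the paper because the proposition is only applied at general, hence smooth, points downstream, but it means the blanket ``for every point $p\in X$'' is stronger than either argument actually establishes.)
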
 
\begin{proof}Fix a  point $p$. Fix a  positive rational number $w'$ such that $(-K_X)^n>(w'n)^n>(wn)^n$. By \cite[6.7.1 Theorem]{SOP},  there is an effective $\bQ$-divisor $\Delta'_p\sim_\bQ -\frac{1}{w'}K_X$ such that $(X, \Delta'_p)$ is not lc at $p$. Take $0 < t \leq 1$ the unique rational number such that $(X,  t\Delta'_p)$ is log
canonical but not klt at $p$. By \cite[Proposition 3.2, Lemma 3.4]{Ambro}, we can find an effective $\bQ$-divisor
$M_p \sim_\bQ -\frac{1}{w'}K_X$ and some rational number $a > 0$ such that for any rational number $0 < \epsilon \ll 1$, the pair
$(X,(1-\epsilon) t\Delta'_p + \epsilon aM_p)$ has a unique minimal non-klt center $V_p$ passing through $p$
which is exceptional. Note that 
$$
(1-\epsilon) t\Delta'_p + \epsilon aM_p\sim_\bQ -\frac{(1-\epsilon) t+\epsilon a}{w'}K_X
$$
and $\frac{(1-\epsilon) t+\epsilon a}{w'}<\frac{1}{w}$ for $0 < \epsilon \ll 1$.
Since $-K_X$ is ample, by adding a $\bQ$-divisor $\bQ$-linearly equivalent to a multiple of $-K_X$ to $\Delta'_p$, we conclude that there exists an effective $\bQ$-divisor
$\Delta_p\sim_\bQ  -\frac{1}{w}K_X$   and $(X,\Delta_p)$ has a unique minimal non-klt center $V_p$
passing through $p$ which is exceptional.
\end{proof}
\begin{lem}\label{w2}
Keep the notation in Proposition \ref{nklt}. If $w>2$, then $\dim V_p>0$ for a general point $p$.
\end{lem}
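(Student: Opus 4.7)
The plan is to argue by contradiction via the Connectedness Lemma. Suppose that $\dim V_p = 0$, i.e.\ $V_p = \{p\}$, for every $p$ in some dense open subset $U \subseteq X$, and pick two very general distinct points $p_1, p_2 \in U$.

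First I would apply Proposition \ref{nklt} to produce effective $\bQ$-divisors $\Delta_{p_i} \sim_{\bQ} -\frac{1}{w} K_X$ ($i=1,2$) realizing $V_{p_i} = \{p_i\}$ as the unique minimal exceptional non-klt center through $p_i$. Revisiting the proof of Proposition \ref{nklt} and using the flexibility in choosing the auxiliary divisors $\Delta'_{p_i}$, $M_{p_i}$, and the added $\bQ$-divisor $\bQ$-linearly equivalent to a positive multiple of $-K_X$ (all of which may be taken as general members of suitable ample linear series, since $-K_X$ is ample), I would further arrange that $p_j \notin \Supp \Delta_{p_i}$ for $j \neq i$, and that $\{p_i\}$ is the only non-klt center of $(X,\Delta_{p_i})$ meeting some open neighborhood of $p_i$; in particular, $\{p_i\}$ is an isolated point of $\Nklt(X, \Delta_{p_i})$.

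Next I would set $D := \Delta_{p_1} + \Delta_{p_2}$, so that $D \sim_{\bQ} -\tfrac{2}{w} K_X$ and
$$-(K_X + D) \sim_{\bQ} \left(1 - \tfrac{2}{w}\right)(-K_X).$$
Since $w > 2$ and $-K_X$ is ample, $-(K_X+D)$ is ample. Applying the Connectedness Lemma to the structure morphism $X \to \operatorname{Spec}\bC$ (with $D^- = 0$), the locus $\Nklt(X, D)$ is connected. On the other hand, since $p_1 \notin \Supp \Delta_{p_2}$, the divisor $\Delta_{p_2}$ vanishes on a neighborhood $U_1$ of $p_1$, so $(X, D)$ agrees with $(X, \Delta_{p_1})$ on $U_1$; by the isolation clause above, $\Nklt(X, D) \cap U_1 = \{p_1\}$, so $\{p_1\}$ is an isolated connected component of $\Nklt(X, D)$. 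Similarly $\{p_2\}$ is an isolated connected component. Since $p_1 \neq p_2$, this contradicts the connectedness of $\Nklt(X, D)$, proving the lemma.

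The main obstacle lies in securing the two strengthenings of Proposition \ref{nklt} used above. The support-avoidance $p_j \notin \Supp \Delta_{p_i}$ should be a routine application of generic choice given the ampleness of $-K_X$. The isolation of $\{p_i\}$ in $\Nklt(X, \Delta_{p_i})$ should follow from the tie-breaking procedure of Ambro already invoked in the proof of Proposition \ref{nklt}, but verifying that this stronger form of the conclusion is actually delivered — and not merely the uniqueness of the minimal center — will be the most delicate step.
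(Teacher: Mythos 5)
Your proof takes exactly the same approach as the paper: form $\Delta_{p_1}+\Delta_{p_2}$, note that $-(K_X+\Delta_{p_1}+\Delta_{p_2})\sim_{\bQ}(1-\tfrac{2}{w})(-K_X)$ is still ample because $w>2$, apply the Connectedness Lemma, and contradict connectedness of the non-klt locus. The paper's version is slightly leaner than yours: it only needs the one-sided condition $p_2\notin\Supp(\Delta_{p_1})$, so that $\{p_2\}$ is isolated in $\Nklt(X,\Delta_{p_1}+\Delta_{p_2})$; since $p_1$ also lies in that locus and $p_1\neq p_2$, the locus is already disconnected, and neither the symmetric support-avoidance nor the isolation of $p_1$ is actually required. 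Concerning the point you rightly flag as delicate — that $\{p_2\}$ is an isolated point of the non-klt locus — the paper relies on precisely the same fact (``$p_2$ is isolated by construction''), and it is delivered by the Ambro tie-breaking invoked in the proof of Proposition \ref{nklt}: after tie-breaking, the pair is klt on a punctured neighborhood of the generic point of the exceptional center, which for a zero-dimensional center is exactly the isolation you want. So your argument is correct and matches the paper's, with a small amount of unnecessary symmetry.
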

\begin{proof}
Assume to the contrary that there exist $p_1\in  X$ such that $V_{p_1}=\{p_1\}$ and $p_2\in  X\backslash \Supp(\Delta_{p_1})$ such that $V_{p_2}=\{p_2\}$. Then $p_1$ and  $p_2$ are contained in $\Nklt(X, \Delta_{p_1}+\Delta_{p_2})$ and $p_2$ is isolated by construction. On the other hand, 
$$
-(K_X+ \Delta_{p_1}+\Delta_{p_2})\sim_\bQ \Big(1-\frac{2}{w}\Big)(-K_X)
$$
is ample. By the connectedness lemma, $\Nklt(X, \Delta_{p_1}+\Delta_{p_2})$ is connected, which is a contradiction.
\end{proof}

\subsection{Finiteness of $a$-constants}
We recall the main result of \cite{DC} in this subsection.
\begin{definition}Let $X$ be a normal projective variety and $H$ a big $\bQ$-divisor. We define the {\it pseudo-effective threshold} to be
$$\tau(X, H) := \inf\{t \geq 0 \mid K_X +tH \text{ is big}\}.$$
\end{definition}
Note that if $X$ is smooth, $a$-constant and pseudo-effective threshold just coincide.
\begin{definition}[cf. {\cite[Definition 3.1]{DC}}]Fix a positive integer $n$ and two positive real numbers $\epsilon$ and $\delta$.
We define $\DD_n(\epsilon, \delta)$ to be the set of lc pairs $(X, \Delta)$ such that:
\begin{enumerate}
\item $X$ is a normal projective variety of dimension $n$,
\item $\Delta$ is a big $\bQ$-divisor with coefficients $\geq \delta$, and
\item $(X, t\Delta)$ is $\epsilon$-lc and $K_X + t\Delta$ is pseudo-effective for some $0\leq t\leq 1$.
\end{enumerate}
\end{definition}

\begin{definition}[cf. {\cite[Definition 3.2]{DC}}] Fix a positive integer $n$ and two positive real numbers $\epsilon$ and $\delta$.
We define the set
$$\TT_n(\epsilon, \delta) := \{\tau(X, \Delta) \mid (X, \Delta) \in \DD_n(\epsilon, \delta)\}.$$
\end{definition}
\begin{thm}[{\cite[Corollary 3.6]{DC}}]\label{DC36}
Fix a positive integer $n$ and three positive real numbers $\epsilon$, $\delta$ and $\eta$.
Then the set $\TT_n(\epsilon, \delta)\cap [\eta, 1]$ is a finite set.
\end{thm}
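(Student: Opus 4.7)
The plan is to split the claim into two steps: first establish log boundedness of the pairs $(X,\tau\Delta)$ with $\tau := \tau(X,\Delta) \in [\eta,1]$, and then deduce that pseudo-effective thresholds can take only finitely many values within such a bounded family.

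First I would record the elementary inequality $\tau \leq t_0$, where $t_0 \in [0,1]$ is the parameter furnished by condition (3) in the definition of $\DD_n(\epsilon,\delta)$: indeed, for any $s > t_0$, $K_X + s\Delta = (K_X + t_0\Delta) + (s - t_0)\Delta$ is a sum of a pseudo-effective and a big class, hence big. Consequently $(X,\tau\Delta)$ inherits the $\epsilon$-lc property, while $K_X + \tau\Delta$ sits on the boundary of the big cone (pseudo-effective but not big), with the components of $\tau\Delta$ having coefficients $\geq \eta\delta$. I would then run an MMP for $(X,\tau\Delta)$ and, on the Iitaka fibration of the resulting minimal model, apply the canonical bundle formula together with Birkar's boundedness results in \cite{B} to produce a log bounded family $(\mathcal{X},\mathcal{D}) \to T$ with $T$ of finite type containing all such pairs $(X,\tau\Delta)$. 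Since $\tau \in [\eta,1]$ is bounded away from $0$, dividing by $\tau$ also bounds the original pairs $(X,\Delta)$.

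Next, on each irreducible component of $T$ the function $t \mapsto \tau(\mathcal{X}_t, \mathcal{D}_t)$ is $\bQ$-valued and governed by the behaviour of the numerical class $K_{\mathcal{X}_t} + s\mathcal{D}_t$ as $s$ varies. Within a log bounded family one has an ACC-type statement for pseudo-effective thresholds, accessible via the ACC for log canonical thresholds of Hacon--McKernan--Xu together with DCC for volumes in bounded families; restricted to the compact interval $[\eta,1]$, this forces the function to take only finitely many values on each component. Taking the union over the finitely many irreducible components of $T$ then gives finiteness of $\TT_n(\epsilon,\delta) \cap [\eta,1]$.

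The main obstacle is squarely the boundedness step. Without Birkar's recent resolution of the BAB-type boundedness statements in \cite{B}, the class of pairs $(X,\tau\Delta)$ under consideration is not obviously controllable, and the set of resulting pseudo-effective thresholds could be infinite, even dense, in $[\eta,1]$; it is exactly this deep geometric input that reduces matters to a Noetherian parameter space where ACC and constructibility arguments can finish the job. The lower bound $\eta>0$ is likewise essential: as $\eta \to 0$ one loses control on Picard numbers and degrees, and any hope of boundedness collapses.
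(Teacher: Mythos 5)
This theorem is imported verbatim from Di Cerbo's paper \cite{DC} (his Corollary~3.6); the present paper gives no proof and simply cites it, so there is nothing in the text for your attempt to be measured against. Your sketch does capture the broad strategy behind Di Cerbo's argument. The preliminary reductions are sound: $\tau \leq t_0 \leq 1$ because a pseudo-effective class plus a big class is big; $(X,\tau\Delta)$ inherits $\epsilon$-lc-ness from $(X,t_0\Delta)$ since discrepancies are non-increasing in the boundary; and the coefficients of $\tau\Delta$ are at least $\eta\delta$. Routing the boundedness step through Birkar's results \cite{B} via the MMP and the Iitaka fibration is also the right move, and you correctly flag this as the deep input.

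Two caveats, though. Birkar's boundedness applies to $\epsilon$-lc pairs with $K+B$ numerically trivial (or Fano-type), not directly to a pair where $K_X+\tau\Delta$ is merely pseudo-effective but not big; the passage from the boundary of the big cone on $X$ to a numerically trivial situation on a bounded object (the general fiber of the Iitaka fibration of a minimal model, via a canonical bundle formula and induction on dimension) is the substantive content of Di Cerbo's theorem, and your sketch asserts rather than carries it out. Second, in the final finiteness step, the ACC for log canonical thresholds of Hacon--McKernan--Xu is not the natural tool: once log boundedness is in hand, finiteness of $\tau$ on the compact interval $[\eta,1]$ follows more directly from constructibility of the pseudo-effective/big loci over the finitely many irreducible components of the Noetherian parameter space, with no need to invoke ACC for lct.
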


To apply this theorem in our situation, we have the following corollary.
\begin{definition}Fix a positive integer $n$.
We define $\PP_n$ to be the set of pairs $(Y, L)$ such that:
\begin{enumerate}
\item $Y$ is a normal projective variety of dimension $n$,
\item $L$ is a base point free big Cartier divisor.
\end{enumerate}
\end{definition}

\begin{cor}\label{finite}Fix a positive integer $n$ and a positive real number $\eta$.
Then the set $$ \{a(Y, L) \mid (Y, L) \in \PP_n\}\cap [\eta, \infty)$$ is a finite set.
\end{cor}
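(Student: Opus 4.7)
The plan is to realize each $a$-constant $\alpha := a(Y,L)$ as $m\tau$, where $\tau$ lies in a set made finite by Theorem \ref{DC36} and $m$ is an integer that, once $\tau$ is fixed, can range only over finitely many values. Given $(Y,L)\in\PP_n$ with $\alpha\geq \eta$, first reduce to the smooth case: take a resolution $\pi\colon Y'\to Y$ and set $L'=\pi^*L$. Then $L'$ is a big Cartier divisor on the smooth variety $Y'$, base point free since $L$ is, and $a(Y',L')=a(Y,L)$. So $(Y',L')\in\PP_n$ realizes the same $\alpha$, and we may assume from now on that $Y$ itself is smooth, in which case $a(Y,L)=\tau(Y,L)$.

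Next, choose the positive integer $m:=\lceil 2\alpha\rceil+1$, so that $2\alpha<m\leq 2\alpha+2$. The line bundle $mL$ is still base point free, big and Cartier, and bpf plus big forces $h^0(mL)\geq 2$ (a nowhere-vanishing section of a nontrivial line bundle would contradict bigness). By Bertini a general $D\in|mL|$ is smooth, and then $(Y,tD)$ is $(1-t)$-lc for all $t\in[0,1)$. Since $D\sim mL$ one has $\tau(Y,D)=\tau(Y,L)/m=\alpha/m$, and our choice of $m$ gives
\[
\frac{\eta}{2\eta+2}\;\leq\;\frac{\alpha}{2\alpha+2}\;\leq\;\tau(Y,D)\;=\;\frac{\alpha}{m}\;<\;\frac12.
\]
Taking $t:=\tau(Y,D)<1/2$, the pair $(Y,tD)$ is $(1-t)$-lc, hence $\tfrac12$-lc, and $K_Y+tD$ is pseudo-effective. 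Since $\Delta=D$ is integral, its coefficients are $\geq 1$. Thus $(Y,D)\in\DD_n(\tfrac12,1)$ and $\tau(Y,D)\in \TT_n(\tfrac12,1)\cap[\eta/(2\eta+2),\,1/2)$.

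By Theorem \ref{DC36} the latter set is finite, with elements $t_1,\dots,t_k$, say. For each $i$, any $\alpha\geq\eta$ mapped to $t_i$ by this construction must satisfy $\alpha=mt_i$ and $m=\lceil 2\alpha\rceil+1\leq 2\alpha+2=2mt_i+2$, which rearranges to $m\leq 2/(1-2t_i)$. Since $t_i<1/2$ this upper bound is finite, so only finitely many integers $m$ occur, and hence only finitely many values $\alpha=mt_i$. Taking the union over $i=1,\dots,k$ yields the finiteness of $\{a(Y,L)\mid (Y,L)\in\PP_n\}\cap[\eta,\infty)$.

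The only nontrivial content is the Bertini-plus-discrepancy bookkeeping in the middle step: one must confirm that a general $D\in|mL|$ is smooth (so that $(Y,tD)$ really is $(1-t)$-lc) and that the strict inequality $\tau(Y,D)<1/2$ holds, which is what ultimately makes the inversion $\alpha=mt_i$ finite. Both are standard for a base point free linear system on a smooth variety, so the argument reduces to a clean application of Theorem \ref{DC36}.
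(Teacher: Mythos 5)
Your proof is correct, and it takes a genuinely different route from the paper. The paper's proof splits into two cases. For $a(Y,L)\in[\eta,\tfrac12]$ it applies a fixed rescaling (replacing $L$ by $\tfrac12 L$, then taking a general smooth member) to land in $\DD_n(\tfrac12,\tfrac12)$ and invoke Theorem~\ref{DC36}. For $a(Y,L)\geq\tfrac12$ it needs an a~priori upper bound $a(Y,L)\leq n+1$, cited from \cite[Proposition~2.10]{LTT}, so that a single further rescaling by $2(n+1)$ pushes everything back into the first case. Your argument instead uses a variable integer multiplier $m=\lceil 2\alpha\rceil+1$ tailored to $\alpha=a(Y,L)$, which always places $\tau(Y,D)=\alpha/m$ in $\big[\tfrac{\eta}{2\eta+2},\tfrac12\big)$, and then recovers finiteness of the $\alpha$'s by the counting inversion $m\leq 2/(1-2t_i)$ for each of the finitely many $t_i$. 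The upshot is that you never need the external bound $a(Y,L)\leq n+1$: boundedness of $a$ drops out of the inversion as a byproduct. Your approach is thus more self-contained, at the cost of the slightly more intricate $m$-bookkeeping; the paper's is shorter once one is willing to cite the LTT bound. Both hinge on the same application of Theorem~\ref{DC36}, and both reduce to the smooth case and construct a smooth divisor via Bertini to get an $\epsilon$-lc pair with integral (or half-integral) coefficients.
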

\begin{proof}We may assume that $\eta \leq \frac{1}{4(n+1)}$. 

Firstly, we show that the set $$\{a(Y, L) \mid (Y, L) \in \PP_n\}\cap \Big[\eta, \frac{1}{2}\Big]$$ is a finite set. Take $(Y, L) \in \PP_n$ and assume that $a(Y, L)\in  [\eta, \frac{1}{2}]$. Note that $a(Y, \frac{1}{2}L)=2a(Y, L)\in  [2\eta, 1]$.
By taking a resolution, we may assume that $Y$ is smooth. In this case $a(Y, \frac{1}{2}L)=\tau(Y, \frac{1}{2}L)$. Replacing $L$ by a general element in $|L|$, we may assume that $L$ is irreducible and smooth. Moreover, $(Y, \frac{1}{2}L)$ is $\frac{1}{2}$-lc and $K_Y+\frac{1}{2}L$ is pseudo-effective, that is, $(Y, \frac{1}{2}L) \in \DD_n(\frac{1}{2}, \frac{1}{2})$. This implies that the set
$$\Big\{ a\Big(Y, \frac{1}{2}L\Big) \Bigm| (Y, L) \in \PP_n\Big\}\cap [2\eta, 1]$$ is finite by Theorem \ref{DC36}, and so is $\{a(Y, L) \mid (Y, L) \in \PP_n\}\cap [\eta, \frac{1}{2}]$.

Then we show that  the set $$\{a(Y, L) \mid (Y, L) \in \PP_n\}\cap \Big[\frac{1}{2}, \infty\Big)$$ is a finite set. Take $(Y, L) \in \PP_n$ and assume that $a(Y, L)\geq \frac{1}{2}$. By taking a resolution, we may assume that $Y$ is smooth. By \cite[Proposition 2.10]{LTT}, $a(Y, L)\leq n+1$. Now we consider $(Y, 2(n+1)L) \in \PP_n$. Note that $a(Y, 2(n+1)L)=\frac{1}{2(n+1)}a(Y, L)$, hence $a(Y, 2(n+1)L)\in[\frac{1}{4(n+1)}, \frac{1}{2}]$. By the first step, $a(Y, 2(n+1)L)$ belongs to a finite set. Hence $a(Y, L)$ belongs to a finite set.
\end{proof}
\section{Proof of Theorem \ref{main}}
We prove the following proposition suggested by B. Lehmann. 
\begin{prop}\label{bounded prop}
Fix a positive real number t. Let $X$ be a smooth projective variety and $L$ a big and semiample $\bQ$-divisor. Then there is a bounded family $\mathcal{U}$ of subvarieties of $X$ such that any subvariety $Y$ not contained in $\mathbf B _+(L)$, with $a(Y,L)>t$ is dominated by some members $Z$ of $\mathcal{U}$, such that $a(Z, L)= a(Y, L)$. 
\end{prop}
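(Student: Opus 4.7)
The plan is to proceed by induction on $d = \dim Y$, with Proposition \ref{4.6} as the dichotomy driving the inductive step. For the base case $d = 1$, a curve $Y$ with $a(Y, L) > t$ must have $\PPP^1$ as its normalization and satisfies $\deg_Y L = 2/a(Y, L) < 2/t$, so by Lemma \ref{4.7} such curves form a bounded family in which $Y$ itself is a member.

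For the inductive step, I would fix $Y$ of dimension $d$ with $a(Y,L) > t$ and $Y \not\subseteq \mathbf{B}_+(L)$, take a resolution $\phi \colon \tilde Y \to Y$, set $\tilde L = \phi^*(L|_Y)$, and apply Proposition \ref{4.6} to $(\tilde Y, \tilde L)$. In Case (1), $\tilde Y$ is covered by proper subvarieties $Y''$ with $a(Y'', \tilde L) = a(\tilde Y, \tilde L)$; since the $a$-constant is defined, each $\tilde L|_{Y''}$ is big, so the birational images $Y' \subset Y$ sit outside $\mathbf{B}_+(L)$, have dimension $< d$, share the $a$-constant $a(Y,L)$, and cover $Y$, whence the inductive hypothesis produces a bounded family whose members cover $Y$ with matching $a$-constants. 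In Case (2), $\tilde Y$ is birational to a $\bQ$-factorial terminal Fano $Y'$ of Picard number one; writing the push-forward as $L' \sim_\bQ \alpha(-K_{Y'})$ with $\alpha = 1/a(Y', L') < 1/t$, the $L$-degree of $Y$ equals $\alpha^d (-K_{Y'})^d$, so a uniform upper bound on $(-K_{Y'})^d$ combined with Lemma \ref{4.7} will place $Y$ itself in a bounded family.

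To obtain such a uniform bound on $(-K_{Y'})^d$, I would argue by contradiction: suppose $(-K_{Y'})^d > (wd)^d$ for arbitrarily large rational $w > 2$. By Proposition \ref{nklt} together with Lemma \ref{w2}, at a general $p \in Y'$ there is an exceptional non-klt center $V_p$ of positive dimension arising from $\Delta_p \sim_\bQ -\frac{1}{w}K_{Y'}$, and subadjunction (Theorem \ref{subadj}) yields
\[
  K_{V_p^\nu} + \Delta_{V_p^\nu} \sim_\bQ -\frac{w-1}{w\alpha}\, L|_{V_p^\nu}.
\]
After replacing $L$ by a sufficiently divisible multiple to make it base-point-free and Cartier, Corollary \ref{finite} restricts $a(V_p, L)$ -- which satisfies $a(V_p, L) \geq (1 - 1/w)\, a(Y', L') > t/2$ because $w > 2$ -- to a finite set; combined with the monotone dependence of the subadjunction threshold on $w$, only finitely many $w$ are admissible, producing the required uniform bound on $w$ and hence on $(-K_{Y'})^d$.

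The main technical obstacle will be the last step of Case (2): translating the subadjunction identity into sharp enough control on $a(V_p, L)$. The effective correction $\Delta_{V_p^\nu}$ only yields the inequality $a(V_p, L) \geq (1 - \tfrac{1}{w})\, a(Y', L')$ directly, so the contradiction requires either a refinement of subadjunction making $\Delta_{V_p^\nu}$ arbitrarily small against $L|_{V_p^\nu}$ (which forces near-equality in the displayed inequality), or a direct application of Theorem \ref{DC36} to the pair $(V_p^\nu, \Delta_{V_p^\nu})$, which would in turn require verifying that this pair lies in $\DD_n(\epsilon, \delta)$ for some fixed $\epsilon, \delta$. The remaining Case (1) reduction and the base case are routine inductive steps.
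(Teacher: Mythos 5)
Your inductive framework, base case, and Case~(1) reduction all match the paper's proof. The genuine gap is in Case~(2), and it is not merely the technical obstacle you flag at the end; the overall goal you set there is the wrong one.

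You aim to establish a \emph{uniform} upper bound on $(-K_{Y'})^{d}$ over all $\bQ$-factorial terminal Fano varieties $Y'$ of Picard number one that can arise, so that $Y$ itself always has bounded $L$-degree and can be put into $\UU$. Such a bound is a BAB-type statement, and avoiding the need to invoke it is precisely the point of the paper. Your proposed contradiction argument also cannot close, for the reason you yourself note: subadjunction only yields $a(V_p,L)\geq(1-\tfrac1w)\,a(Y',L')$, and a lower bound for $a(V_p,L)$ that lies in some fixed finite set places no upper bound on $w$ whatsoever -- as $w$ grows, the lower bound simply approaches $a(Y',L')$, which is entirely compatible with $a(V_p,L)\geq a(Y,L)$. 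Neither of your suggested repairs (sharpening subadjunction so that $\Delta_{V_p^\nu}$ is small, or showing $(V_p^\nu,\Delta_{V_p^\nu})\in\DD_n(\epsilon,\delta)$) is needed or obviously available.

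What the paper does instead is to exploit the freedom built into the statement: $Y$ need not belong to $\UU$; it only needs to be \emph{dominated} by members of $\UU$. Using Corollary~\ref{finite}, the set of $a$-constants of subvarieties of $X$ that exceed $t$ is finite, so one may fix \emph{once and for all} a rational $c_0<1$ such that for every subvariety $Z'$ of $X$ with $a(Z',L)>t$ the half-open interval $[c_0\,a(Z',L),\,a(Z',L))$ contains no $a$-constant of any subvariety of $X$; then set $w=\tfrac{1}{1-c_0}$ and arrange $w>2$. With this fixed $w$, Case~(2) splits in two: if $(-K_{Y'})^{i+1}\leq(w(i+1))^{i+1}$ then $(L|_Y)^{i+1}$ is uniformly bounded and Lemma~\ref{4.7} puts $Y$ directly into a bounded family; if instead $(-K_{Y'})^{i+1}>(w(i+1))^{i+1}$, then Proposition~\ref{nklt} together with Lemma~\ref{w2} supply positive-dimensional non-klt centers $V_p$ sweeping out $Y$, subadjunction gives $a(V_p,L)\geq c_0\,a(Y,L)$, and -- this is the decisive step your argument is missing -- the gap property of $c_0$ upgrades this to $a(V_p,L)\geq a(Y,L)$. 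Since $\dim V_p<\dim Y$, the inductive hypothesis applies to the $V_p$, and Proposition~\ref{4.1} then lets one replace $\geq$ by $=$ for general members. It is the gap condition defining $c_0$, not any bound on $(-K_{Y'})^{d}$, that carries the argument.
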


\begin{proof}
Note that  for a  subvariety $Y$ not contained in ${\bf B}_+(L)$, $L|_Y$ is nef and big, and so $a(Y,L)$ is well defined. Therefore we will only consider subvarieties not contained in ${\bf B}_+(L)$.

Replacing $L$ by some multiple, we may assume that $L$ is a base point free Cartier divisor. 

We construct $\UU$ inductively by increasing induction on the dimension of $Y$. 

For a subvariety $Y$ with $a(Y,L)>t$ and $\dim Y=1$, it is easy to see that $Y$ is a rational curve with $$\deg_Y(L)=Y\cdot L=\frac{2}{a(Y,L)}<\frac{2}{t}.$$ By Lemma \ref{4.7}, such $Y$ form a bounded family $\UU_1$.

Suppose that we have constructed a bounded family $\mathcal{U}_i$ of subvarieties such that every subvariety $Y$ with $a(Y,L)>t$ and $\dim Y\leq i$ is dominated by some members $Z$ of $\mathcal{U}$ such that $a(Z, L)= a(Y, L)$. 
We construct $\UU_{i+1}$ as follows. Suppose that $Y$ is an $(i+1)$-dimensional subvariety satisfying $a(Y,L)>t$. By taking a resolution, we may assume that $Y$ is smooth. Proposition \ref{4.6} shows that either
\begin{enumerate}
\item $Y$ is covered by proper subvarieties $Z$ with 
$a(Z,L)=a(Y,L),$
or
\item $Y$ is birational to a  $\bQ$-factorial terminal Fano variety $Y'$ of Picard number 1.
\end{enumerate}

In  Case (1), by induction, $Z$ is dominated by some members $Z'$ of $\mathcal{U}_i$ such that $a(Z', L) = a(Z, L)$, and so is $Y$.

In Case (2), by taking a resolution, we may assume $\phi: Y\dashrightarrow Y'$ is a morphism. By the proof of {\cite[Proposition 4.6]{LTT}}, $K_{Y'}+a(Y,L)\phi_*(L|_Y)\equiv 0$.

We define constant $c_0<1$ and $w>2$ as follows: since $L$ is base point free, we know that the set 
$$\{a(Z, L)\mid Z \text{ is a subvariety of } X\}\cap (t, \infty]$$ is finite
 by Corollary \ref{finite}. Hence we may take a rational number $c_0<1$ such that the set $$\{a(Z, L)\mid Z \text{ is a subvariety of } X\} \cap [c_0a(Z', L), a(Z', L))$$ is empty for any subvariety $Z'$ with $a(Z', L)>t$. Take $w=\frac{1}{1-c_0}$. We may assume $w>2$ by decreasing $c_0$.

If $(-K_{Y'})^{i+1}\leq (w(i+1))^{i+1}$, then 
$$
(L|_Y)^{i+1}\leq (\phi_*(L|_Y))^{i+1}\leq \frac{(w(i+1))^{i+1}}{a(Y, L)^{i+1}}<\frac{(w(i+1))^{i+1}}{a(X,L)^{i+1}}.
$$
Then by  Lemma \ref{4.7}, such $Y$ form a bounded family $\UU'_{i+1}$.

Now we assume that   $(-K_{Y'})^{i+1}> (w(i+1))^{i+1}$. By Proposition \ref{nklt}, for  a general point $p\in Y'$, there exists an effective $\bQ$-divisor $\Delta'_p\sim_\bQ -\frac{1}{w}K_{Y'}$ such that $V'_p\subset\Nklt({Y'}, \Delta'_p )$ is the minimal exceptional non-klt center containing $p$. 
Note that by Lemma \ref{w2} and $w>2$, $\dim V'_p>0$. Let $\nu:\tilde{V}^\nu_p\to {V}'_p$ be the normalization. For any $\bQ$-Cartier divisor $G$ on ${V}'_p$, we denote $G|_{\tilde{V}^\nu_p}=\nu ^*G$.
By Theorem \ref{subadj},   there is an effective $\bQ$-divisor $\Delta_{\tilde{V}^\nu_p}$ such that
$$
(K_{Y'}+\Delta'_p)|_{\tilde{V}^\nu _p}\sim_{\bQ} K_{\tilde{V}^\nu_p}+\Delta_{\tilde{V}^\nu_p}.
$$
Note that since $K_{Y'}+a(Y,L)\phi_*L\equiv 0$, we have
$$
 K_{\tilde{V}^\nu_p}+\Delta_{\tilde{V}^\nu_p}+\Big(1-\frac{1}{w}\Big)a(Y,L)\phi_*L|_{\tilde{V}^\nu_p}\sim_\bQ 0.
$$
Let $V_p$ be the strict transform of $V'_p$ on $Y$. Let $\tilde{V}_p$ be a common   resolution of ${\tilde{V}^\nu_p}$ and $V_p$, $f: \tilde{V}_p\to V_p$, $g: \tilde{V}_p\to \tilde{V}^\nu_p$. Then
\begin{align*}
{}&K_{\tilde{V}_p}+\Big(1-\frac{1}{w}\Big)a(Y,L)f^*(L|_{V_p})\\
=\ {}&g^* \Big(K_{\tilde{V}^\nu_p}+\Delta_{\tilde{V}^\nu_p}+\Big(1-\frac{1}{w}\Big)a(Y,L)\phi_*L|_{\tilde{V}^\nu_p}\Big)-g_*^{-1}\Delta_{\tilde{V}^\nu_p}+E\\
\sim_\bQ {}&-g_*^{-1}\Delta_{\tilde{V}^\nu_p}+E,
\end{align*}
where  $E$ is a $g$-exceptional $\bQ$-divisor. Note that the $\bQ$-divisor $-g_*^{-1}\Delta_{\tilde{V}^\nu_p}+E$ is not big.
Hence $K_{\tilde{V}_p}+(1-\frac{1}{w})a(Y,L)f^*(L|_{V_p})$ is not big and therefore 
$$
a(V_p, L)\geq \Big(1-\frac{1}{w}\Big)a(Y,L)={c_0}a(Y,L).$$
By the definition of $c_0$, this implies that $
a(V_p, L)\geq  a(Y,L).$
Since $p$ is a general point, $Y$ is dominated by such $V_p$.  By induction, $V_p$ is dominated by some members $Z$ of $\mathcal{U}_i$ such that $a(Z, L) =a(V_p, L) \geq a(Y, L)$. Hence $Y$ is dominated by some members $Z$ of $\mathcal{U}_i$ such that $a(Z, L) \geq a(Y, L)$. By Proposition \ref{4.1}, by taking general members, $Y$ is dominated by some members $Z$ of $\mathcal{U}_i$ such that $a(Z, L) = a(Y, L)$.

Hence we may take $\UU_{i+1}=\UU_i\cup \UU'_{i+1}$, and the proof is completed.
\end{proof}

\begin{proof}[Proof of Theorem \ref{main}]
Take $t=a(X, L)$ in Proposition \ref{bounded prop}, there is a bounded family $\mathcal{U}$ of subvarieties of $X$ such that any subvariety $Y$ not contained in $\mathbf B _+(L)$, with $a(Y,L)>a(X, L)$ is dominated by some members $Z$ of $\mathcal{U}$, such that $a(Z, L)= a(Y, L)>a(X, L)$. By Theorem \ref{4.2}, there exists a proper closed subset $W \subset X$ such that  any member $Z$ of the family $\UU$ satisfying $a(Z, L) >a(X, L)$ is contained in $W$. Hence any subvariety $Y$ with $a(Y,L)>a(X, L)$ is contained in $W$.
\end{proof}

\end{document}